\documentclass[11pt]{article}


\usepackage{graphicx, amssymb, amsmath, amsthm, amsfonts, amsbsy, color, textcomp}
\usepackage[it,hang]{caption}
\usepackage{verbatim}
 
\usepackage{pdfsync}

\usepackage{hyperref}

\newtheorem{theorem}{Theorem}
\newtheorem{proposition}[theorem]{Proposition}
\newtheorem{corollary}[theorem]{Corollary}
\newtheorem{definition}{Definition}

\newcommand{\bi}{\begin{itemize}}
\newcommand{\ei}{\end{itemize}}
\newcommand{\be}{\begin{equation}}
\newcommand{\ee}{\end{equation}}
\newcommand{\ben}{\begin{equation*}}
\newcommand{\een}{\end{equation*}}
\newcommand{\bea}{\begin{eqnarray}}
\newcommand{\eea}{\end{eqnarray}}
\newcommand{\bean}{\begin{eqnarray*}}
\newcommand{\eean}{\end{eqnarray*}}

\newcommand{\bm}{\left ( \begin{matrix}}
\newcommand{\fm}{\end{matrix} \right ) }

\newcommand{\p}{\rho}

\usepackage{pxfonts}

%
%
\begin{document}
%
%

\title{Bi-banded Paths, a Bijection and \\the Narayana Numbers}
\author{Judy-anne Osborn\\
\, \\
Mathematical Sciences Institute,\\
Australian National University,\\
Canberra, ACT 0200, Australia\\
}
\date{}

\maketitle

\begin{abstract}
We find a bijection between bi-banded paths and peak-counting paths, applying to two classes of lattice paths including Dyck paths.  Thus we find a new interpretation of Narayana numbers as coefficients of weight polynomials enumerating bi-banded Dyck paths, which class of paths has arisen naturally in previous literature in a solution of the stationary state of the `TASEP' stochastic process.  
\end{abstract}

\section{Introduction} 

In a paper of Brak and Essam \cite{BE} the class of bi-banded Dyck paths arose in the solution of TASEP (Totally ASymmetric Exclusion Process) - a model for  particle-hopping with excluded volume \cite{Bl}. It was observed that the coefficients of the weight polynomial, or partition function, for bi-banded Dyck paths were the \emph{Narayana numbers}, which are known for enumerating Dyck paths by \emph{peaks} \cite{N1955,N1979}, but the authors did not obtain a bijective proof.  

In this paper we give a mapping between bi-banded Dyck paths and Dyck paths enumerated by peaks which provides a bijective proof of the equality observed in \cite{BE}.  Furthermore, we show that the same bijection applies to the broader class of bilateral Dyck paths which are not restricted to stay above the baseline.  To do this we use a representation of Dyck and bilateral Dyck paths which we call a \emph{checkmark} representation.

Bi-banded paths are a class of weighted lattice paths with edges weighted by alternating horizontal bands.  We give a formal definition of bi-banded paths in Section~\ref{sec:definitions}.   An example of the bijection which we will prove in Section~\ref{sec:bijection} is illustrated in the case of Dyck paths in Figure~\ref{fig:banded3by3bijectDyckThicklines}.

\begin{figure}[htbp]
\begin{center}
\includegraphics[width=120mm]{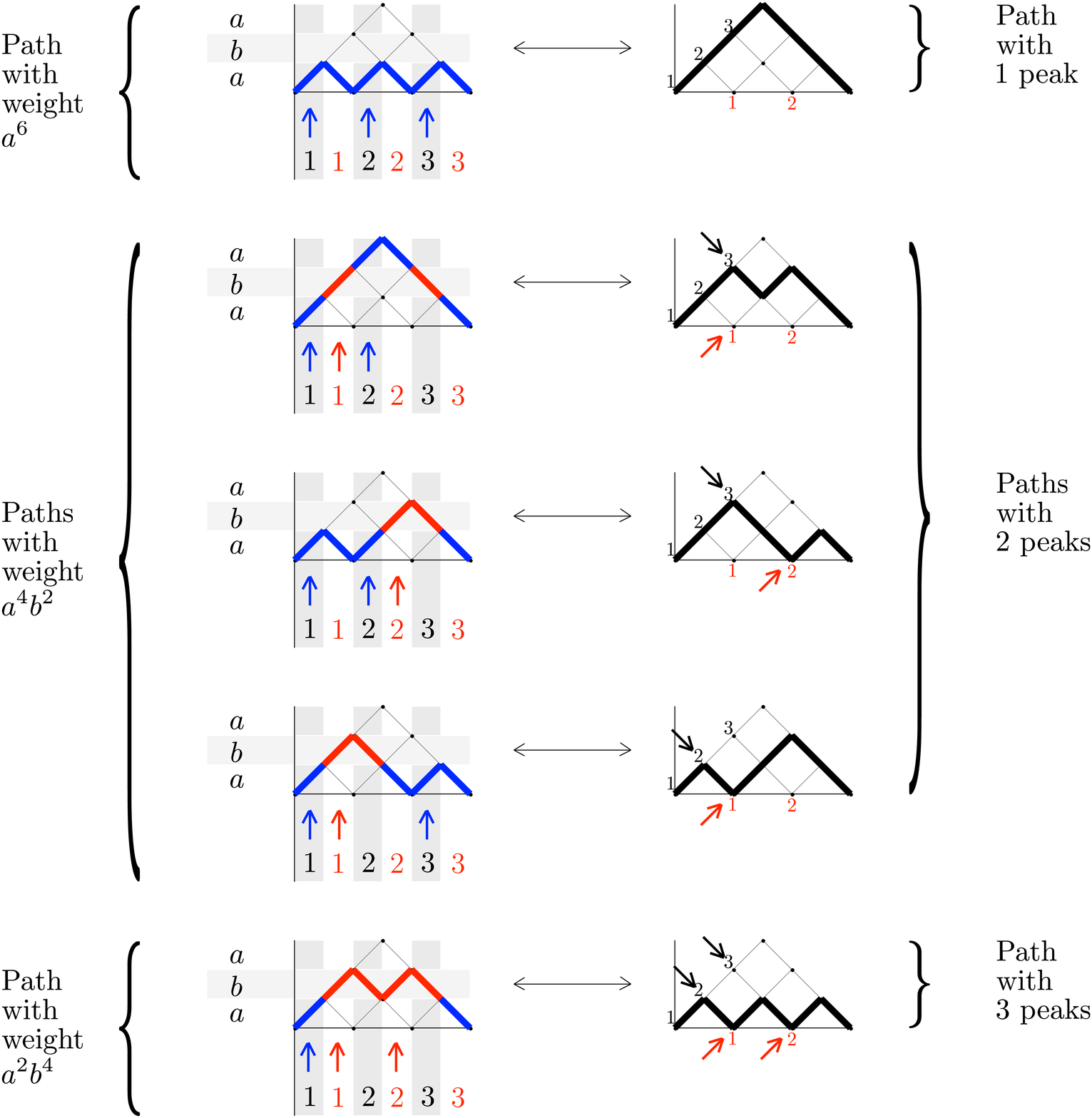}
\caption{An example of the bijection between bi-banded Dyck paths and peak-counting Dyck paths, for all Dyck paths of length $2n=6$.  The weight polynomials  are $W_6(a,b)=a^6+3a^4b^2+a^2b^4$ for bi-banded paths and $\widehat{W}_6(m)=m^3+3m^2+m$ for peak-counting paths.} 
\label{fig:banded3by3bijectDyckThicklines}
\end{center}
\end{figure}

\section{Definitions} \label{sec:definitions}

Within this paper, a \emph{path of length $t$} is an alternating sequence $p=v_0e_1v_1\hdots e_{t}v_{t}$ of vertices $\{v_i\}_{i=0,...,t}$ and edges $\{e_j\}_{j=1,...,t}$ constrained as follows.  Each edge is a pair a vertices $e_j:=(v_{j-1},v_j)$, classified as either an \emph{up-step} or a \emph{down-step}, accordingly as the difference $v_j-v_{j-1}$ equals $(1,1)$ or $(1,-1)$ respectively.  The \emph{initial vertex} $v_0=(0,0)$, the \emph{final vertex} $v_t=(t,0)$, and all vertices belong to a specified lattice: $v_i \in \mathcal{L}$, where $\mathcal{L}$ equals either $\mathcal{U}$ or $\mathcal{H}$ defined below.  Since all paths begin and end on the $x$-axis, they have even length.

We distinguish between two sets of such paths, $\mathcal{D}_{2n}$ and $\mathcal{B}_{2n}$, the sets of  \emph{Dyck paths} and \emph{bilateral Dyck paths} of length $2n$ respectively.   The former set consists of paths confined to the upper half plane: $\mathcal{U}:=\mathbb{Z}_{\ge 0} \times \mathbb{Z}_{\ge 0}$, and the latter superset consists of paths which obey no such restriction, whose vertices may occur anywhere within the half plane $\mathcal{H}:=\mathbb{Z}_{\ge 0} \times \mathbb{Z}$.

\subsection{Bi-banded weighting} \label{subsec:bibanded}

A path is said to be \emph{bi-banded} if it is weighted by edges, where the edges of the path are classified according to whether they fall in an \emph{odd} or \emph{even} band as follows.  
\begin{figure}[htbp]
\begin{center}
\includegraphics[height=20mm]{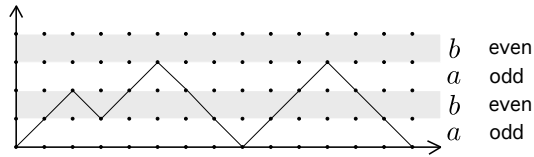}
\caption{A bi-banded Dyck path with weight $a^8b^6$.}
\label{fig:bibandedabOddEven}
\end{center}
\end{figure}
An \emph{up-step} from an even to an odd height is said to lie in an \emph{odd band}.  A \emph{down-step} from an odd to an even height is also said to lie in an \emph{odd band}.  All other edges are said to lie in an \emph{even band}.  The weight of an edge is
\begin{equation} \label{eq:bibanded}
w(e)=
\begin{cases}
a & \text{$e$ is in an odd band}\\
b & \text{$e$ is in an even band}
\end{cases}
\end{equation}
The \emph{bi-banded path weight monomial}, as in the example for a Dyck path in Figure~\ref{fig:bibandedabOddEven}, is defined to be the product of the weights of the edges in the path:
\be \label{eq:monomial}
w(p)=\prod_{e \in p}w(e)
\ee
The \emph{weight polynomial}, or \emph{partition function}, parametrized by path length $t$, is the sum of all the weight polynomials over all bi-banded paths of length $t$: 
\be \label{eq:weightPolynomial}
W_t(a,b)=\sum_{\begin{smallmatrix}
\text{length}(p)=t
\end{smallmatrix}} w(p)
\ee

\subsection{Peak-counting weighting}

A path is said to be \emph{peak-counting} if it is weighted by peaks, where a \emph{peak} is a vertex $v_i$  lying between successive edges $e_{i-1}=(v_{i-1},v_i)$ and $e_i=(v_i,v_{i+1})$, such that $e_{i-1}$ is an \emph{up edge} and $e_i$ is a \emph{down edge}.  
For convenience we define the initial vertex $v_0$ to be a peak if the first edge $(v_0,v_1)$ is a \emph{down-step}; and the final vertex $v_t$ to be a peak if the last edge $(v_{t-1},v_t)$ is an \emph{up-step}.  

It follows from the definition that the first and last vertices of a Dyck path are never peaks.  However for a more general bilateral Dyck path, the first and last vertices may or may not be peaks.  This may be interpreted, if desired, by appending extra edges to the beginning and end of a path.  We imagine all paths to begin at vertex $v_{-1}=(-1,-1)$ with an up-step, and end at vertex $v_{t+1}=(t+1, -1)$ with a down-step, as illustrated in Figure~\ref{fig:peakCountBoxEx}.
\begin{figure}[htbp]
\begin{center}
\includegraphics[height=3cm]{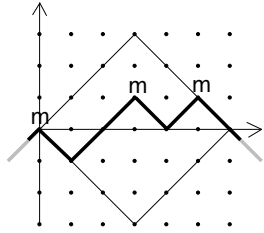}
\caption{A peak-counting bilateral Dyck path with weight $m^3$.}
\label{fig:peakCountBoxEx}
\end{center}
\end{figure}
The weight of a vertex is
\be \label{eq:peaksMonomial}
\widehat{w}(v)=\begin{cases}
m & \text{$v$ is a peak}\\ 
1 & \text{$v$ is \emph{not} a peak}
\end{cases}
\ee
The \emph{peak-counting path weight monomial} is the product of the weights of the vertices:
\be
\widehat{w}(p)=\prod_{v \in p} \widehat{w}(v).
\ee
It is immediate that this is equivalent to 
\be
\widehat{w}(p)=p^{|\text{peaks}|},
\ee
where $|\text{peaks}|$ is the number of peaks.
The peak-counting \emph{weight polynomial}, or \emph{partition function}, parametrized by path length $t$, is the sum of all the weight polynomials over all peak-counting Dyck paths of length $t$: 
\be \label{eq:weightPolynomialcorners}
\widehat{W}_t(m)=\sum_{\begin{smallmatrix}
\text{length}(p)=t
\end{smallmatrix}} \widehat{w}(p)
\ee

\section{Two Path Representations} \label{sec:pathReps}

We use two different representations of paths as input and output for the bijection of the next section.  They are a classical word representation and a new  `checkmark' representation.

\subsection{Word representation} \label{subsec:word}
For a path $p \in \mathcal{B}_{2n}$, the well-known \emph{word representation} consists of a sequence of letters, often $U$ and $D$, which specify the edges from $1$ to $2n$ as up-steps and down-steps.  For later convenience, we use \emph{arrows} to represent  up-steps and \emph{blanks} to represents down-steps.

Conversely,  any sequence comprising exactly $n$ arrows and $n$ blanks generates a unique lattice path which we choose to begin at $v_0=(0,0)$ and of necessity ends at $v_{2n}=(2n,0)$.  
\begin{figure}[htbp]
\begin{center}
\includegraphics[height=25mm]{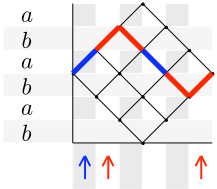}
\caption{A characterization of the path illustrated as the finite sequence: ``arrow, arrow, blank, blank, blank, arrow".}
\label{fig:updownarrowExNonums}
\end{center}
\end{figure}
This is the characterization used in Step~\ref{it:updownarrows} of the algorithm defining the bijection below in Section~\ref{sec:bijection}, and it is clearly reversible.  An example of the characterization is illustrated in Figure~\ref{fig:updownarrowExNonums}.  

\textbf{Note on bi-banded weighting: } If a path characterized by the word representation has a bi-banded weighting, then every arrow with an odd index corresponds to an up-step in an odd band, thus weighted `$a$', and these are the only up-steps in the path weighted `$a$'.  Also, for each up-step weighted `$a$', there is exactly one down-step weighted `$a$'.  Thus the weight of a bi-banded path characterized in this way with $u$ odd-indexed arrows, is $a^{2u}b^{2n-2u}$.  Alternatively we write $a^{2n-2v}b^{2v}$, where $v$ is the number of even-indexed arrows.  In the example given in Figure~\ref{fig:updownarrowExNonums}, the weight is $a^2b^4$.

\subsection{Checkmark representation} \label{subsec:checkmark}

A path of length $2n$ may also be characterized using  `checkmarks' alongside the north-west and south-west sides of the bounding box - the square with vertices $(0,0), (n,n), (2n,0)$ and $(n, -n)$.  
An example is given by Figure~\ref{fig:checkMarksExNamed}.
\begin{figure}[htbp]
\begin{center}
\includegraphics[height=25mm]{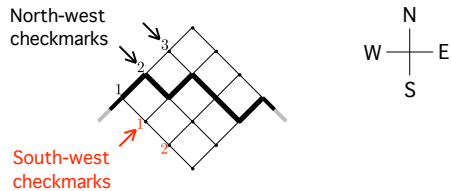}
\caption{A path characterized in terms checkmarks at north-west positions with labels `2' and `3', and south-west position labeled `1'.  The \emph{checkmarks} specify where the path turns.  }
\label{fig:checkMarksExNamed}
\end{center}
\end{figure}

\begin{definition} \label{def:checkMarkRep} Let $n \in \mathbb{N}$.  Then let a \emph{north-west checkmark sequence}, $s_{NW}$, be a sequence of arrows and blanks of length $n$; and a \emph{south-west checkmark sequence}, $s_{SW}$, be a sequence of arrows and blanks of length $n-1$.  If the pair of sequences $(s_{NW}, s_{SW})$ satisfies the following condition
\begin{enumerate}
\item[i.] the number of arrows in the $s_{NW}$ sequence is either equal to or exceeds by one the number of arrows in the $s_{SW}$ sequence 
\end{enumerate}
then it is called  a \emph{checkmark pair}.  
Furthermore, if the pair of checkmark sequences satisfies the additional condition
\begin{enumerate}
\item[ii.] the index of the $h^{\text{th}}$ arrow in $s_{NW}$ (indexed from $1$ to $n$) is strictly greater than the index of the $h^{\text{th}}$ arrow in $s_{SW}$ (indexed from $1$ to $n-1$), for  $h = 1,...,|s_{NW}|_{\uparrow}$,

\end{enumerate}
where $|s_{NW}|_{\uparrow}$ is the number of arrows in $s_{NW}$.  Then 
 the checkmark pair is called a \emph{Dyck checkmark pair}.
\end{definition}

\begin{proposition} \label{thm:checkMarkRep}
Any path $p \in \mathcal{B}_{2n}$ has a unique representation as a checkmark pair.  Conversely, any checkmark pair defines a unique path in $\mathcal{B}_{2n}$. Furthermore, any path $p \in \mathcal{D}_{2n}$ has a unique representation as a Dyck checkmark pair.  Conversely, any Dyck checkmark pair defines a unique path in $\mathcal{D}_{2n}$. 

\end{proposition}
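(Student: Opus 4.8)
The plan is to exhibit an explicit map $\Phi$ from $\mathcal{B}_{2n}$ to checkmark pairs, an explicit inverse $\Psi$, and then to check that $\Phi$ carries $\mathcal{D}_{2n}$ exactly onto the Dyck checkmark pairs. Everything rests on two monotone statistics. Writing $u_i$ and $d_i$ for the numbers of up-steps and down-steps among the first $i$ edges, one has $v_i=(i,\,u_i-d_i)$, so that $x+y=2u_i$ and $x-y=2d_i$. An up-step raises $x+y$ by $2$ and fixes $x-y$, while a down-step does the reverse, so both $x+y$ and $x-y$ are non-decreasing along $p$, running from $0$ to $2n$. First I would use this to show that after a peak the quantity $x+y$ is frozen until the following valley and then strictly increases, so the peaks of $p$ lie on pairwise distinct diagonals $x+y=2k$ with $k\in\{1,\dots,n\}$, and symmetrically the valleys on pairwise distinct anti-diagonals $x-y=2k$. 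This distinctness is precisely what makes the north-west and south-west records well defined.

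I would then define $\Phi$ by declaring the $k\tth$ entry of $s_{NW}$ to be an arrow exactly when the $k\tth$ up-step is immediately followed by a down-step (equivalently, when $p$ has a peak on $x+y=2k$), and the $k\tth$ entry of $s_{SW}$ to be an arrow exactly when the $k\tth$ down-step is immediately followed by an up-step, for $k=1,\dots,n-1$; the potential turn at the $n\tth$ down-step is omitted because there all down-steps are exhausted, so whether a valley occurs is already fixed by the requirement that $p$ end at $(2n,0)$. The inverse $\Psi$ reads the arrows as the up-counts $P_1<\cdots<P_a$ at which $p$ turns downward and the down-counts $Q_1<\cdots<Q_b$ at which it turns upward, and reconstructs $p$ as the unique path whose maximal ascending runs terminate at up-counts $P_1,\dots,P_a$ and whose maximal descending runs terminate at down-counts $Q_1,\dots,Q_b$, the leftover steps being absorbed into a final descent-then-ascent forced by the endpoint.

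Verifying $\Psi\circ\Phi=\mathrm{id}$ and $\Phi\circ\Psi=\mathrm{id}$ is then bookkeeping on maximal runs; the one genuine point is that $\Psi$ yields a legitimate path exactly when condition~(i) holds. I would prove this by a short count over runs showing that $(\#\,s_{NW}\text{ arrows})-(\#\,s_{SW}\text{ arrows})$ equals $1$ when $p$ begins with an up-step and $0$ when it begins with a down-step, using that the unrecorded valley at down-count $n$ is present iff $p$ ends with an up-step. This forces $a\in\{b,b+1\}$, which is condition~(i); conversely the two cases tell $\Psi$ whether to open with an ascent or a descent, pinning the reconstruction down uniquely. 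As a consistency check the number of checkmark pairs is $\sum_b\binom{n-1}{b}\big[\binom{n}{b}+\binom{n}{b+1}\big]=\binom{2n}{n}=|\mathcal{B}_{2n}|$.

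Finally, for the Dyck refinement I would translate positivity. Since $y_i=u_i-d_i$, the path stays in $\mathcal{U}$ iff $u_i\ge d_i$ for every $i$, and these minima are attained at the valleys and along the final descent. Such paths must begin with an up-step, i.e.\ fall in the $a=b+1$ case of condition~(i); there the $h\tth$ valley immediately follows the $h\tth$ peak, so its height is $P_h-Q_h$, and the last descent stays non-negative iff the final peak sits at up-count $n$. Assembling these, $p\in\mathcal{D}_{2n}$ iff each south-west arrow is dominated in rank-by-rank fashion by the corresponding north-west arrow together with the terminal condition, which is exactly condition~(ii). I expect this equivalence to be the main obstacle: one must match the interleaving of peaks and valleys to the rank-by-rank comparison of arrow indices, and handle the endpoint carefully (the unrecorded valley at down-count $n$ and the requirement that the final ascent be empty) so that the weak height inequality $u_i\ge d_i$ is reproduced precisely, with the correct strictness, by condition~(ii).
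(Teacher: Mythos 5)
Your bilateral half is sound, and it is essentially the paper's construction translated into coordinates: the paper's north-west checkmarks are exactly records of the diagonals $x+y=\text{const}$ on which the path turns right, and your monotonicity of $x+y$ and $x-y$ is the reason the paper's geometric procedure (arrows where a NW-to-SE line meets a right turn, etc.) is well defined. Your count verification $\sum_b\binom{n-1}{b}\bigl[\binom{n}{b}+\binom{n}{b+1}\bigr]=\binom{2n}{n}$ and the case analysis showing $a-b\in\{0,1\}$ are correct. However, there is a genuine gap in the Dyck half, and it is precisely at the point you flagged as ``the main obstacle'': your indexing convention is off by one relative to the definition, and with your convention the equivalence with condition~(ii) is simply false. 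You record an arrow at NW position $k$ when the $k$th up-step is followed by a down-step, for $k=1,\dots,n$. Under that map the Dyck path $UDUD$ ($n=2$) gives $s_{NW}=(\uparrow,\uparrow)$, $s_{SW}=(\uparrow)$, which violates condition~(ii) both because the domination $P_1>Q_1$ fails ($1\not>1$) and because $s_{NW}$ has more arrows than $s_{SW}$, whereas condition~(ii), read with $h$ running to $|s_{NW}|_{\uparrow}$, forces equal arrow counts. Even $UUDD$ fails: it maps to $s_{NW}=(\ ,\uparrow)$, $s_{SW}=(\ )$, again with an unmatched NW arrow. The structural reason is the one your own analysis exposes: in your convention a Dyck path yields the \emph{weak} inequality $P_h\ge Q_h$ (valley heights $\ge 0$) plus a separate terminal condition $P_a=n$, and always lands in the case $a=b+1$; condition~(ii) instead demands \emph{strict} domination with matched arrows, so the two characterizations cannot coincide.

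The paper's convention repairs exactly this. It labels the NW side starting at the origin, so an arrow in NW position $k$ means the path turns down after $k-1$ up-steps (position $1$ is an arrow precisely when the path opens with a down-step, using the phantom initial up-step), and the final peak, at up-count $n$, lies on the north-east boundary and is \emph{never} recorded. With that shift, a Dyck path's peak at up-count $j_h$ is recorded at NW index $j_h+1$, its following valley at SW index $k_h$, and the height condition $j_h-k_h\ge 0$ becomes exactly the strict inequality $j_h+1>k_h$ of condition~(ii); moreover the unrecorded final peak makes the NW and SW arrow counts equal for Dyck paths, matching the implicit count requirement in~(ii), and a path starting with a down-step acquires an NW arrow at index $1$ that can never be strictly dominated, so~(ii) correctly excludes it. So your proof is not salvageable as written: you must either shift your NW statistic by one (record up-count $k-1$ at position $k$, drop the peak at up-count $n$) and redo the Dyck translation, or else prove the proposition for a \emph{modified} condition~(ii$'$) ($P_h\ge Q_h$ for $h\le b$ together with $P_a=n$) — but that is a different statement from the one in the paper.
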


\begin{proof}
Given $p \in \mathcal{B}_{2n}$, north-west and south-west checkmark sequences may be generated as follows.  
\bi
\item Label the first $n$ vertices along the north-west side of the bounding box from `1' to `$n$', starting at the origin.  Place an arrow at each label lying on a NW-to-SE-oriented line in which the path turns right.  The resulting length-$n$ sequence of arrows and blanks is $s_{NW}$.  (Note, any turn on the north-east boundary does not generate an arrow.)

\item Label $n-1$ vertices along the south-west side of the bounding box from $1$ to $n-1$, starting at the $(1,-1)$ vertex and using the first coordinate of the position as the label.  Place an arrow at each label lying on a NE-to-SW-oriented line in which the path turns left.  The resulting length-$(n-1)$ sequence of arrows and blanks is  $s_{SW}$.  (Note, any turn on the south-east boundary does not generate an arrow.)
\ei
This process generates a pair $(s_{NW}, s_{SW})$, whose constituent sequences have the correct lengths.   The number of arrows in $s_{SW}$ is always either equal to or  less by one than the number of arrows in $s_{NW}$.  This is because every time that the path turns down (right) subject to a north-west checkmark, it must turn up (left) again to compensate in order to reach the north-east bounding wall on which it ends at $v_{2n}=(2n,0)$.  However sometimes, as in the example in Figure~\ref{fig:checkMarksExNamed}, the final up-turn (left) occurs on the south-east bounding wall and so is not accompanied by a south-west checkmark.

Conversely, given a checkmark pair,  a unique path may be created by starting at the origin with a north-easterly orientation, continuing in this direction until either a north-west checkmark or the upper bounding wall is found to lie on the same $NW$-to-$SE$-oriented line as the current vertex, turning right, continuing in the new direction until a south-west checkmark or bounding wall is found to lie on the same $NE$-to-$SW$-oriented line as the current vertex, turning left; and then repeating the process until the rightmost vertex of the bounding box is reached.  
All checkmarks must be used in this process, since the number of north-west checkmarks exceeds the number of south-west checkmarks by at most one, and  north-west checkmarks are always used before south-west checkmarks. 

Now, let $p$ be a Dyck path.  Then the vertices of $p$ must stay on or above the baseline $y=0$.  This occurs precisely when the checkmark representation of $p$ satisfies condition (ii) of the Theorem, because every north-west checkmark at position $h$ on the north-west side of the bounding box, turning it downwards, has a corresponding  checkmark at some position strictly less than $h$ on the south-west side of the bounding box, turning the path back upwards before or when it hits the baseline.

Conversely, any pair of checkmark sequences satisfying (ii) generate a path which stays above the baseline, which is therefore a Dyck path.

\end{proof}

\textbf{Note on peak-counting weighting:  } If a path represented by checkmarks is weighted by peaks, then there is a peak for every right turn, as well as a final peak somewhere along the north-east boundary, possibly at the last vertex, as in the example in Figure~\ref{fig:checkMarksExNamed}, which has  weight $m^3$.  Thus the peaking-counting weight of a path with $v$ north-west checkmarks is $m^{v+1}$.

\section{The Bijection} \label{sec:bijection}

The same bijection will suffice for bilateral Dyck paths as for Dyck paths, since the latter are a subset of the former and it turns out that the mapping we describe preserves the property of being a Dyck path.  Hence we begin by describing the bijection for the general class of bilateral Dyck paths.  

\vspace{3mm} 
\begin{theorem} \label{thm:bijection} Let $\phi: \mathcal{B}_{2n} \rightarrow \mathcal{B}_{2n}$ be the function defined by  the following steps.  

\begin{enumerate}

\item \label{it:updownarrows} Express the path $p$ as a sequence, $\{s_j\}_{j=1,..,t}$, of \emph{arrows} and \emph{blanks}, where each arrow stands for an \emph{up-step}, and each blank stands for a \emph{down-step}.

\item \label{it:subseq} 

	\begin{enumerate}
	\item \label{it:split} Extract two subsequences, $s_\text{odd}$ and $s_\text{even}$, comprising the odd and even indexed entries of $s$, respectively.  Then
	
	\item \label{it:toggle} Create sequence $s_{NW}$ by toggling the entries of $s_\text{odd}$, i.e. record a \emph{blank} for each \emph{arrow}, and an \emph{arrow} for each original \emph{blank}.
	
	\item \label{it:removelast} Create sequence $s_{SW}$ by copying $s_\text{even}$ and removing the last entry (which may be either a \emph{blank} or an \emph{arrow}).
	\end{enumerate}

\item \label{it:checkmarks} Reconstitute a new path $\phi(p)$ using the pair $(s_{NW}, s_{SW})$ as north-west and south-west \emph{checkmark} sequences, as described in Section~\ref{subsec:checkmark} above.
\end{enumerate}
Then $\phi$ is a bijection.    Furthermore, if the path $p$ has bi-banded weighting
\be \label{eq:pathweightab}
w(p) = a^{2n-2v}b^{2v}
\ee
then the path $\phi(p)$ has peak-counting weight
\be \label{eq:pathweightpeak}
\widehat{w}(\phi(p)) = m^{v+1}.
\ee

\end{theorem}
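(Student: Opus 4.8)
The plan is to verify three things in turn: that $\phi$ is well-defined (the output of Step~\ref{it:checkmarks} really is a path in $\mathcal{B}_{2n}$), that $\phi$ is a bijection, and that it carries the bi-banded weight to the stated peak-counting weight. Throughout I would track arrow counts. Write $u$ for the number of arrows in $s_{\text{odd}}$ and $v$ for the number of arrows in $s_{\text{even}}$; since $p$ runs from $(0,0)$ to $(2n,0)$ it has exactly $n$ arrows in total, so $u+v=n$.

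For well-definedness I would appeal to Proposition~\ref{thm:checkMarkRep}: it suffices to check that $(s_{NW},s_{SW})$ is a checkmark pair, i.e. that condition~(i) of Definition~\ref{def:checkMarkRep} holds. Toggling in Step~\ref{it:toggle} sends the $u$ arrows of $s_{\text{odd}}$ to $n-u=v$ arrows in $s_{NW}$. The sequence $s_{SW}$ is $s_{\text{even}}$ with its last entry deleted, so it has either $v$ arrows (if that entry was a blank) or $v-1$ arrows (if it was an arrow). In both cases the arrow count of $s_{NW}$ either equals, or exceeds by one, that of $s_{SW}$, which is exactly condition~(i); so $(s_{NW},s_{SW})$ is a checkmark pair and Proposition~\ref{thm:checkMarkRep} produces a unique $\phi(p)\in\mathcal{B}_{2n}$.

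For bijectivity I would exhibit the inverse by reversing the steps. Step~\ref{it:checkmarks} is reversible since the checkmark representation is unique (Proposition~\ref{thm:checkMarkRep}), Step~\ref{it:toggle} is reversed by toggling $s_{NW}$ back to $s_{\text{odd}}$, and Step~\ref{it:split} is reversed by re-interleaving. The one step that discards information is Step~\ref{it:removelast}, and recovering the deleted final entry of $s_{\text{even}}$ is the crux. The observation above shows how: the two cases of condition~(i) correspond precisely to the two possibilities for that entry. Concretely, the deleted entry is a blank when $s_{NW}$ and $s_{SW}$ have equally many arrows, and an arrow when $s_{NW}$ has one more arrow than $s_{SW}$; condition~(i) guarantees no other case can occur, so the entry is recovered uniquely. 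One then checks that the reconstituted word has $u+v=n$ arrows and so names a genuine element of $\mathcal{B}_{2n}$, and that the two composites are the identity. I expect this reversal of Step~\ref{it:removelast} to be the main obstacle, since it is the only place where the map is not transparently invertible and where condition~(i) is doing essential work.

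Finally, the weight statement follows by bookkeeping. If $w(p)=a^{2n-2v}b^{2v}$ then, by the Note on bi-banded weighting, $v$ is the number of even-indexed arrows, i.e. the number of arrows in $s_{\text{even}}$. By the arrow count above, $s_{NW}$ then has exactly $v$ arrows, hence $\phi(p)$ has exactly $v$ north-west checkmarks. By the Note on peak-counting weighting, a path with $v$ north-west checkmarks has peak-counting weight $m^{v+1}$, giving $\widehat{w}(\phi(p))=m^{v+1}$ as claimed.
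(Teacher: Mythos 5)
Your proposal is correct and follows essentially the same route as the paper: invert each step in turn, with the crux being that the entry deleted in Step~\ref{it:removelast} is recoverable from the arrow-count constraint, and the weight claim following from the two notes on weightings. Your version is in fact slightly more careful than the paper's, since you explicitly verify condition~(i) of Definition~\ref{def:checkMarkRep} to establish well-definedness and state the recovery rule for the deleted entry case by case, whereas the paper phrases the same recovery via the requirement that the reconstituted length-$2n$ word have equally many arrows and blanks.
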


\begin{proof}
The word and checkmark representations of paths which constitute Steps~\ref{it:updownarrows} and \ref{it:checkmarks} were already shown to be invertible when they were introduced.  It remains to show Step~\ref{it:subseq} is also invertible.  The inverse of Step~\ref{it:split} is to re-interleave the two subsequences that it originally split.  Step~\ref{it:toggle} toggles arrows with blanks, which process is its own inverse.  Step~\ref{it:removelast} removes the final element of a sequence of blanks and arrows. No information is lost in this step, since the removed element is specified by the constraint that the recreated length-$2n$ sequence should have an equal number of arrows and blanks; thus we add whichever is necessary to complete the sequence.    Hence $\phi$ is a well-defined bijection from bilateral Dyck paths to bilateral Dyck paths.  Furthermore the notes on weights in Subsections~\ref{subsec:word} and \ref{subsec:checkmark} give Equations~(\ref{eq:pathweightab}) and (\ref{eq:pathweightpeak}).
\end{proof}

An example of the bijective mapping $\phi$ is given by Figures~\ref{fig:updownarrowExNonums} and \ref{fig:checkMarksExNamed} as path $p$ and image $\phi(\p)$ respectively.

\begin{corollary} Let $\phi |_{\mathcal{D}_{2n}}: \mathcal{D}_{2n} \rightarrow \mathcal{D}_{2n}$ be the restriction of the bijection $\phi$ defined in Theorem~\ref{thm:bijection} to the class of Dyck paths $\mathcal{D}_{2n} \subset \mathcal{B}_{2n}$.  Then $\phi |_{\mathcal{D}_{2n}}$ is a bijection.  Furthermore, equations~(\ref{eq:pathweightab}) and (\ref{eq:pathweightpeak}) still hold where $p$ is a Dyck path subject to the bi-banded weighting and $\phi|_{\mathcal{D}_{2n}}(p)$ its image subject to the peak-counting weighting.

\end{corollary}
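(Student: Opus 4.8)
The plan is to lean on what Theorem~\ref{thm:bijection} already gives us: $\phi$ is a bijection of all of $\mathcal{B}_{2n}$, and the weight identities (\ref{eq:pathweightab})--(\ref{eq:pathweightpeak}) hold for \emph{every} bilateral Dyck path. The weight half of the corollary is then immediate, since a Dyck path is in particular a bilateral Dyck path, so both equations transfer verbatim to $p \in \mathcal{D}_{2n}$ and its image $\phi|_{\mathcal{D}_{2n}}(p)$. Everything thus reduces to showing that $\phi$ restricts to a bijection of $\mathcal{D}_{2n}$ onto itself. As $\phi|_{\mathcal{D}_{2n}}$ inherits injectivity from $\phi$ and $\mathcal{D}_{2n}$ is finite, it suffices to prove the single inclusion $\phi(\mathcal{D}_{2n}) \subseteq \mathcal{D}_{2n}$: injectivity then forces $|\phi(\mathcal{D}_{2n})| = |\mathcal{D}_{2n}|$, and equality of finite sets yields surjectivity onto $\mathcal{D}_{2n}$. (One may instead prove the biconditional $p \in \mathcal{D}_{2n} \iff \phi(p)\in\mathcal{D}_{2n}$, which is no harder.)

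The substance is therefore the implication: $p$ Dyck $\Rightarrow \phi(p)$ Dyck, and I would prove it by translating both sides into inequalities on prefix counts. On the input side, $p$ is Dyck exactly when every prefix of its word has at least as many arrows as blanks. Writing $\alpha_k$ and $\beta_k$ for the numbers of arrows among the first $k$ odd-indexed and first $k$ even-indexed letters of $s$, the heights of $p$ at positions $2k-1$ and $2k$ are $2(\alpha_k+\beta_{k-1})-(2k-1)$ and $2(\alpha_k+\beta_k)-2k$; since $\beta_{k-1}\le\beta_k$, the ballot condition is equivalent to $\alpha_k+\beta_{k-1}\ge k$ for $k=1,\dots,n$. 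On the output side, Proposition~\ref{thm:checkMarkRep} says $\phi(p)$ is Dyck exactly when its checkmark pair $(s_{NW},s_{SW})$ satisfies condition (ii). Step~\ref{it:toggle} makes an arrow of $s_{NW}$ in position $k$ correspond to a blank $s_{2k-1}$, and Step~\ref{it:removelast} makes an arrow of $s_{SW}$ in position $k$ correspond to an arrow $s_{2k}$; hence the prefix arrow-counts satisfy $A^{NW}_k = k-\alpha_k$ and $A^{SW}_k=\beta_k$, and the ballot condition becomes precisely
\[
A^{NW}_k \le A^{SW}_{k-1}\qquad (k=1,\dots,n).
\]

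It remains to show this prefix inequality is equivalent to condition (ii). Writing $a_1<\dots<a_r$ and $b_1<\dots<b_{r'}$ for the arrow positions of $s_{NW}$ and $s_{SW}$, the argument is pure bookkeeping. Taking $k=n$ above gives $r=A^{NW}_n\le A^{SW}_{n-1}=r'$; combined with the general fact recorded in the proof of Proposition~\ref{thm:checkMarkRep} that a checkmark pair always has $r'\le r$, this forces $r=r'$, so every north-west arrow is matched by a south-west arrow. Then for each $h$, setting $k=a_h$ gives $h = A^{NW}_{a_h}\le A^{SW}_{a_h-1}$, i.e.\ $b_h\le a_h-1<a_h$, which is condition (ii); reversing the same estimates recovers the inequality from condition (ii), yielding the full equivalence. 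I expect this final step to be the main obstacle, precisely because of the boundary case $r=r'+1$: such a pair satisfies condition~(i) yet never the prefix inequality, and it correctly corresponds to a non-Dyck path, since the last peak sits at position $a_r\le n$ and the ensuing forced up-turn on the south-east wall occurs at height $(a_r-1)-n<0$, below the baseline. Keeping this case straight — and being careful that condition (ii) is read with $h$ ranging only over arrows that actually exist in $s_{SW}$ — is where the care is needed.
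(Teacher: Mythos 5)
Your proposal is correct, but it takes a genuinely different route from the paper's. The paper argues both implications separately and locally: to show $\phi(p)$ satisfies condition (ii) it matches each north-west checkmark with the odd-indexed downstep of $p$ it came from, observes that this downstep ``must have been preceded by an upstep'' because $p$ is Dyck, and identifies that upstep with the required earlier south-west checkmark; it then runs the mirror argument for $\phi^{-1}$, and explicitly defers the details to the author's thesis. Your proof instead makes everything quantitative: you encode the Dyck property of $p$ as the ballot inequality $\alpha_k+\beta_{k-1}\ge k$, translate it through Steps 2(b)--(c) into the prefix inequality $A^{NW}_k\le A^{SW}_{k-1}$ (your formulas $A^{NW}_k=k-\alpha_k$ and $A^{SW}_{k-1}=\beta_{k-1}$ check out), and prove this is \emph{equivalent} to condition (ii) of Definition~\ref{def:checkMarkRep} by evaluating the inequality at $k=n$ (forcing $r=r'$ via condition (i)) and at $k=a_h$ (giving $b_h<a_h$), with the converse by the reverse estimates. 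This yields the biconditional $p\in\mathcal{D}_{2n}\iff\phi(p)\in\mathcal{D}_{2n}$ in one pass, so the paper's separate $\phi^{-1}$ argument is subsumed; your injectivity-plus-finiteness reduction to a single inclusion is also valid, though redundant once you have the equivalence. What each approach buys: the paper's matching argument is shorter and geometrically transparent, but as stated it is a sketch --- ``preceded by an upstep'' does not by itself produce an order-preserving matching of distinct even-indexed upsteps to distinct odd-indexed downsteps, which is exactly what condition (ii) demands, whereas your prefix-count bookkeeping makes that step airtight. You also handle two boundary issues the paper glosses over: the case $r=r'+1$ (which satisfies condition (i), fails the prefix inequality at $k=n$, and correctly yields a path dipping to height $(a_r-1)-n<0$ on the south-east wall) and the precise reading of the quantifier in condition (ii) when $s_{SW}$ has fewer arrows than $s_{NW}$. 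One trivial slip: the fact $r'\le r$ is condition (i) of Definition~\ref{def:checkMarkRep} itself, not merely an observation in the proof of Proposition~\ref{thm:checkMarkRep}; this does not affect the argument.
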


\begin{proof} It is sufficient to show that $\phi$ and $\phi^{-1}$ both map Dyck paths into Dyck paths.  (For a more detailed argument than the one which follows, see \cite{O}.)

Let $p\in\mathcal{D}_{2n}$, where we regard $p$ as having a bi-banded weighting.  Now, $\phi(p)$ is a Dyck path iff it satisfies condition (ii) of Theorem~\ref{def:checkMarkRep}.  This condition must hold in $\phi(p)$, because every north-west checkmark in the peak-counting path corresponds to a downstep that is an odd-indexed edge of the original path, and that downstep in the original path must have been preceded by an upstep in the original path, since the original path was a Dyck path.  The preceding upstep corresponds to the desired south-west checkmark.  

Conversely, let $p\in\mathcal{D}_{2n}$, where we regard $p$ as being weighted by peaks.  Transform $p$ according to the map $\phi^{-1}$, and regard the new path $\phi^{-1}(p)$ as having a bi-banded weighting.   As previously noted, in the checkmark representation of the peak-counting path $p$, every north-west checkmark with a given index has a corresponding south-west checkmark with strictly smaller index. Each north-west checkmark becomes a downstep in the new path $\phi^{-1}(p)$.  The corresponding south-west checkmarks become preceding upsteps, which keep the new path on or above the baseline, as required.   
\end{proof}
\begin{corollary}\label{thm:bilateral Dyck_and_Narayana} Let $W_{2n}(a,b)$ and $\widehat{W}_{2n}(a,b)$ be, respectively, the weight polynomials for bi-banded paths and peak-counting paths, of length $2n>0$ on the lattice $\mathcal{L}$. Then
\be \label{eq:Wbibanded}
W_{2n}(a,b) = \sum_{v=0}^{n-1}c_{n,v}a^{2n-2v}b^{2v}
\ee
and 
\be \label{eq:Wpeak}
\widehat{W}_{2n}(a,b) = \sum_{v=0}^{n-1}c_{n,v}m^{v+1}
\ee
where in the case  $\mathcal{L}$ equal to the half plane $\mathcal{H}$, corresponding to bilateral Dyck paths, the coefficients $c_{n,v} = \binom{n}{v}^2$;
and in the case  $\mathcal{L}$ equal to the upper half plane $\mathcal{U}$, corresponding to Dyck paths, the coefficients  $c_{n,v}=N_{n,v}$, with
\be
N_{n,v}:= \frac{1}{v+1}\binom{n}{v}\binom{n-1}{v} 
\ee 
being the Narayana numbers.  
\end{corollary}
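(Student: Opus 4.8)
The plan is to separate the statement into two logically independent parts: first, that $W_{2n}$ and $\widehat{W}_{2n}$ are governed by a single coefficient sequence $c_{n,v}$, and second, the explicit evaluation of that sequence in each lattice case. The first part will follow immediately from the bijection already in hand; the second reduces to two enumerations, one elementary and one resting on the classical peak-count of Dyck paths.

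For the first part I would sort the length-$2n$ paths by the integer $v$. By the Note on bi-banded weighting in Subsection~\ref{subsec:word}, a bi-banded path contributes the monomial $a^{2n-2v}b^{2v}$ to the sum (\ref{eq:weightPolynomial}) exactly when it has $v$ even-indexed arrows, and by Theorem~\ref{thm:bijection} its image $\phi(p)$ then contributes exactly $m^{v+1}$ to (\ref{eq:weightPolynomialcorners}). Since $\phi$ is a bijection respecting this correspondence of weight classes, the count $c_{n,v}$ of bi-banded paths with monomial $a^{2n-2v}b^{2v}$ equals the count of peak-counting paths with monomial $m^{v+1}$; collecting terms then yields (\ref{eq:Wbibanded}) and (\ref{eq:Wpeak}) simultaneously with a common $c_{n,v}$. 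Because $\phi$ restricts to a bijection of $\mathcal{D}_{2n}$ (the preceding corollary), the same reasoning applies on both $\mathcal{H}$ and $\mathcal{U}$, and only the numerical value of $c_{n,v}$ changes with $\mathcal{L}$.

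To evaluate $c_{n,v}$ on $\mathcal{H}$ I would stay inside the word representation: a path in $\mathcal{B}_{2n}$ is an arbitrary placement of $n$ arrows among the $2n$ indices, and $a^{2n-2v}b^{2v}$ forces exactly $v$ of the $n$ even indices and $n-v$ of the $n$ odd indices to hold arrows, so $c_{n,v}=\binom{n}{v}\binom{n}{n-v}=\binom{n}{v}^2$ in a single line. On $\mathcal{U}$ condition (ii) of Definition~\ref{def:checkMarkRep} obstructs so direct a count, so I would instead route through the peak statistic: by the first part, $c_{n,v}$ is the number of Dyck paths of semilength $n$ with exactly $v+1$ peaks, which by the classical result \cite{N1955,N1979} equals $\tfrac1n\binom{n}{v+1}\binom{n}{v}$, and a routine rearrangement of binomial coefficients identifies this with $\tfrac{1}{v+1}\binom{n}{v}\binom{n-1}{v}=N_{n,v}$.

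The genuine obstacle is the Dyck evaluation. If one insists on a self-contained argument rather than invoking the peak-enumeration of Dyck paths, one must prove that enumeration directly --- most naturally by recasting condition (ii) as a ballot-type inequality between the arrow indices of $s_{NW}$ and $s_{SW}$ and then applying a reflection or cycle-lemma argument --- which is precisely the delicate counting the bijection was designed to circumvent. I would also re-examine the summation ranges: because the first up-step of any nonempty Dyck path lies in an odd band, every Dyck path carries a factor $a$, forcing $v\le n-1$ there; but on $\mathcal{H}$ the path oscillating between heights $0$ and $-1$ has all edges in even bands and realises $v=n$ with weight $b^{2n}$, so the bilateral sum should in fact extend to $v=n$. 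Checking the totals $\sum_{v}\binom{n}{v}^2=\binom{2n}{n}$ and $\sum_{v}N_{n,v}=\tfrac{1}{n+1}\binom{2n}{n}$ against the number of paths in each class would confirm that the correct index ranges have been used and that no weight class has been dropped.
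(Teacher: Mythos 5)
Your proposal is correct, and it diverges from the paper in one substantive way: the evaluation of $c_{n,v}$ in the bilateral case. The paper works on the peak-counting side, in the checkmark representation: a path with $v+1$ peaks has $v$ north-west checkmarks freely placed among $n$ positions and either $v$ or $v-1$ south-west checkmarks freely placed among $n-1$ positions, giving $c_{n,v}=\binom{n}{v}\bigl(\binom{n-1}{v}+\binom{n-1}{v-1}\bigr)=\binom{n}{v}^2$ via Pascal's rule. You instead count on the bi-banded side in the word representation --- $v$ arrows among the $n$ even indices and $n-v$ among the $n$ odd indices, so $\binom{n}{v}\binom{n}{n-v}=\binom{n}{v}^2$ in one step --- which is slightly more direct and needs no binomial identity; either count transfers to the other weight polynomial through $\phi$, exactly as in your first part, which matches the paper's ``follows directly from Theorem~\ref{thm:bijection}.'' For the Dyck case you and the paper do the same thing, citing the classical peak enumeration; your explicit rewriting $\frac1n\binom{n}{v+1}\binom{n}{v}=\frac{1}{v+1}\binom{n}{v}\binom{n-1}{v}$ is correct, and your remark that a self-contained proof would require a ballot/reflection argument is fair but not a gap, since the paper likewise invokes the known result. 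Finally, your observation about the summation range is a genuine catch: the zigzag path $DUDU\cdots$ below the axis lies in $\mathcal{B}_{2n}$, has every edge in an even band (weight $b^{2n}$, i.e.\ $v=n$) and $n+1$ peaks, so in the bilateral case the sums should run to $v=n$; note that the paper's own checkmark count gives $c_{n,n}=\binom{n}{n}\binom{n-1}{n-1}=1\neq 0$, confirming the stated upper limit $n-1$ is an off-by-one there, while for Dyck paths $N_{n,n}=0$ makes the stated range harmless. Your consistency checks $\sum_v\binom{n}{v}^2=\binom{2n}{n}$ and $\sum_v N_{n,v}=\frac{1}{n+1}\binom{2n}{n}$ settle this correctly.
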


\begin{proof}  Equations~(\ref{eq:Wbibanded}) and (\ref{eq:Wpeak}) follow directly from Theorem~\ref{thm:bijection}.  To calculate the coefficients in the case of bilateral Dyck-paths, it is convenient to use the peak-counting weighting.  Note that a path with $v+1$ peaks has $v$ north-west checkmarks, and either $v$ or $v-1$ south-west checkmarks, all of which may be freely distributed.  Hence there are exactly
\be
c_{n,v}= \binom{n}{v}\left (\binom{n-1}{v} + \binom{n-1}{v-1}\right ) = \binom{n}{v}^2
\ee
peak-counting bilateral Dyck paths with weight $m^{v+1}$.  For Dyck paths the peak-counting result in terms of Narayana numbers \cite{N1955, N1979} is well-known.
\end{proof}

\subsection{Comments}  The bijection $\phi$ was found after many failed attempts to `act geometrically' on Dyck paths by applying rules such as reflections which related to a poset structure.  With the perspective of hindsight, it seems unlikely that these kinds of rules would have produced the required bijection, since we needed to relate lattice paths that were weighted in very different ways.  Bi-banded paths are weighted by edges, and the weighting can be seen as part of the underlying lattice, with edges `picking up' weights as they pass over the bands in the lattice.  By contrast, peak-counting paths are weighted by vertices, and the weights cannot be interpreted as sitting on the underlying lattice - instead the weights are an intrinsic property of the shape of the path.  To find this bijection $\phi$ we widened our perspective to a larger class of paths, the bilateral Dyck paths, and then acted `non-locally' on them.

\section*{Acknowledgements} Financial support from the Australian Research Council via its support for the Centre of Excellence for Mathematics and Statistics of Complex Systems (MASCOS) is gratefully acknowledged by the author.


\begin{thebibliography}{10}


\bibitem{Bl} R. A. Blythe and M. R. Evans.  Nonequilibrium steady states of matrix-product form: a solver's guide.  J. Phys. A: Math. Theor. 40.  pp R333--R441.  2007. 

\bibitem{BE} R. Brak and  J. W.  Essam. Walkers near a wall with periodic weight: application to ASEP with parallel update.  Preprint, 2010.


\bibitem{N1955} T. V. Narayana.  Sur les treillis form\'es par les partition d'un entier; leurs applications \`a la th\'eorie des probabilit\'es.  Comp. Rend. Acad. Sci. Paris.  240.  pp 1188--1189, 1955. 


\bibitem{N1979} T. V. Narayana.  \emph{Lattice Path Combinatorics with Statistical Applications}.  University of Toronto Press.  1979.

\bibitem{O} Judy-anne Osborn.  \emph{Combinatorics of Pavings and Paths}, PhD thesis, The University of Melbourne. 2007.

\end{thebibliography}
\end{document}